\documentclass[a4paper,11pt,reqno]{amsart}

\usepackage{amssymb}
\usepackage[all]{xy}

\usepackage{hyperref}
\usepackage{xcolor}
\usepackage{enumitem}
\usepackage{subcaption}
\usepackage[final]{graphicx}
\usepackage{float}
\usepackage{epic}
\usepackage{setspace}

\usepackage[all]{xy}
\usepackage{color}

\usepackage{verbatim}

\usepackage{tikz}
\usetikzlibrary[topaths]

\newcount\mycount

\numberwithin{equation}{section}

\newtheorem{introtheorem}{Theorem}

\newtheorem{theorem}{Theorem}[section]

\newtheorem{lemma}[theorem]{Lemma}

\newtheorem{proposition}[theorem]{Proposition}

\newtheorem*{theorem*}{Theorem}
\newtheorem*{question*}{Question}
\newtheorem*{corollary*}{Corollary}

\theoremstyle{definition}
\newtheorem{definition}[theorem]{Definition}

\theoremstyle{remark}

\theoremstyle{remark}

\newtheorem{remark}[theorem]{Remark}
\newtheorem{question}[theorem]{Question}

\begin{document}

\title{Word-Length Spectral Triples of $(\mathbb{Z}/2\mathbb{Z})\wr\mathbb{F}_{d}$ Are Not Metric}

\author{Mario Klisse}

\address{CAU Kiel, Department of Mathematics,
Heinrich-Hecht-Platz 6, 24118 Kiel, Germany}

\email{klisse@math.uni-kiel.de}

\begin{abstract}

Given a countable discrete group equipped with a proper length function, one can construct a natural spectral triple on its reduced group C$^{\ast}$-algebra. A well-studied question in non-commutative metric geometry is whether the Connes pseudo-metric associated with such a triple recovers the weak$^{\ast}$-topology on the state space, thereby yielding a compact quantum metric space in the sense of Rieffel. While this metric property is known to hold for several classes of groups - including those of polynomial growth and word-hyperbolic groups - it was widely expected that not every word-length function induces a compact quantum metric space. Despite this, no explicit counterexample has been identified to date. In this note, we provide the first family of counterexamples by proving that for every integer $d \geq 2$ the canonical spectral triple of the Lamplighter group $(\mathbb{Z}/2\mathbb{Z})\wr\mathbb{F}_{d}$, equipped with the word-length function associated with a finite symmetric generating set, fails to be a spectral metric space.

\end{abstract}

\date{\today. \emph{MSC2020:} 46L87, 58B34, 46L89, and 20F65.}

\maketitle

\section*{Introduction}

\vspace{3mm}

A central objective of non-commutative geometry is the translation of classical geometric data into an operator-algebraic framework. The foundational building block for this dictionary is the concept of a \emph{spectral triple} (or \emph{unbounded Fredholm module}), introduced by Connes as a non-commutative analogue of the standard Dirac operator on a compact spin manifold. Formally, a spectral triple $(\mathcal{A},\mathcal{H},D)$ over a separable unital C$^{\ast}$-algebra $A$ consists of a norm-dense unital $\ast$-subalgebra $\mathcal{A}\subseteq A$, a faithful representation of $A$ on a Hilbert space $\mathcal{H}$ by bounded operators, and a densely defined, self-adjoint operator $D$ on $\mathcal{H}$, where the operator $D$ is required to have compact resolvent, and the commutators $[D,x]$ must for all $x\in\mathcal{A}$ extend to bounded operators on $\mathcal{H}$.

Beyond capturing differential structures, spectral triples also provide a natural setting for non-commutative metric geometry. As shown by Connes (see \cite{Connes89}), the operator $D$ induces a pseudo-metric on the state space $\mathcal{S}(A)$ of $A$, defined by 
\begin{equation}
d_{D}(\psi,\psi^{\prime}):=\sup\left\{ |\psi(a)-\psi^{\prime}(a)|\;\middle|\;a\in\mathcal{A}\text{ with }\Vert[D,a]\Vert\leq1\right\} .\label{eq:PseudoMetric}
\end{equation}
In the commutative case, this recovers the classical Monge--Kantorovich metric on the space of probability measures of a compact metric space, and the topology induced by this metric coincides with the weak$^{\ast}$-topology. 

However, in the general non-commutative setting, the pseudo-metric $d_{D}$ does not automatically metrize the weak$^{\ast}$-topology on $\mathcal{S}(A)$. Inspired by this observation, Rieffel introduced the theory of \emph{compact quantum metric spaces} (see \cite{Rieffel99, Rieffel04}). While Rieffel's original framework was developed broadly for order unit spaces, the present article focuses exclusively on metric structures arising from spectral triples. Specifically, given a spectral triple $(\mathcal{A},\mathcal{H},D)$, we define the associated \emph{Lipschitz semi-norm} by $L_{D} \colon \mathcal{A}\ni a\mapsto\Vert[D,a]\Vert$. If the Connes pseudo-metric in (\ref{eq:PseudoMetric}) recovers the weak$^{\ast}$-topology on the state space of $A$, the pair $(A,L_{D})$ is called a \emph{compact quantum metric space}, and the semi-norm $L_{D}$ is referred to as a \emph{Lip-norm}. In this case we also say that the triple $(\mathcal{A},\mathcal{H},D)$ is a \emph{spectral metric space} (or also a \emph{metric spectral triple}).

In \cite{Connes89}, Connes constructed natural examples of spectral triples arising from discrete groups equipped with length functions. To be precise, let $G$ be a countable discrete group endowed with a proper length function $\ell$. We consider the reduced group C$^{\ast}$-algebra $C_{\text{r}}^{\ast}(G)\subseteq\mathcal{B}(\ell^{2}(G))$ and its canonical norm-dense $\ast$-subalgebra, the group algebra $\mathbb{C}[G]$. Furthermore, we define a densely defined, self-adjoint operator $D_{\ell}$ on $\ell^{2}(G)$ via the formula
\[
D_{\ell}\delta_{g}:=\ell(g)\delta_{g}\quad\text{for all }g\in G,
\]
where $(\delta_{g})_{g\in G}$ denotes the standard orthonormal basis of the Hilbert space $\ell^{2}(G)$. The resulting triple $(\mathbb{C}[G],\ell^{2}(G),D_{\ell})$ then forms a non-degenerate spectral triple. The investigation of this specific construction through the lens of non-commutative metric geometry was initiated by Rieffel in \cite{Rieffel02}. He demonstrated that for $G=\mathbb{Z}^{d}$ - equipped with a word-length function or the restriction of a norm from $\mathbb{R}^{d}$ - the associated triple is indeed metric. This result was subsequently generalized by Christ and Rieffel in \cite{ChristRieffel} to arbitrary groups and length functions satisfying a bounded doubling condition; in particular, this extension covers all groups of polynomial growth equipped with their respective word-length functions. Moving beyond the amenable setting, Ozawa and Rieffel \cite{OzawaRieffel} proved an analogous statement for word-hyperbolic groups. Further variants of these results have been obtained for higher-order commutator seminorms on groups with the rapid decay property \cite{AntonescuChristensen04}, for twisted reduced group C$^{*}$-algebras \cite{LongWu17,LongWu21}, for discrete quantum groups with rapid decay \cite{BVZ15}, for non-commutative solenoids \cite{FLLP24}, and for certain Iwahori--Hecke algebras of finite-rank right-angled Coxeter systems \cite{KlissePerovic25}. Related crossed-product constructions were developed and studied in \cite{BMR10,HSWZ13,KK21,AKK25, Klisse26}.

Besides the immediate question for an extension of the results above to larger classes of groups, to this day there is no known example of a group $G$, equipped with a word-length function that does not induce a compact quantum metric space; this is, for instance, explicitly mentioned in \cite{Austad26}. Motivated by this, in the present note we consider the \emph{Lamplighter group} $(\mathbb{Z}/2\mathbb{Z})\wr\mathbb{F}_{d}$ with $d \geq 2$, and prove that for arbitrary finite symmetric generating sets the associated spectral triples are not metric.

\begin{introtheorem} \label{MainTheorem} For every integer $d\geq 2$ let $G:=(\mathbb{Z}/2\mathbb{Z})\wr\mathbb{F}_{d}$ be the Lamplighter group of $\mathbb{F}_{d}$ and let $S$ be a finite symmetric generating set. Then the spectral triple $(\mathbb{C}[G],\ell^{2}(G),D_{\ell})$ is not a spectral metric space, where $\ell$ denotes the word-length function associated with $S$. \end{introtheorem}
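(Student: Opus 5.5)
The plan is to use Rieffel's criterion for Lip-norms coming from spectral triples, and to show it fails. Recall that $(C^{\ast}_{\text{r}}(G),L_{D_{\ell}})$ is a compact quantum metric space if and only if the image of
\[
\mathcal{B}:=\left\{ x\in\mathbb{C}[G]\;\middle|\;\Vert[D_{\ell},x]\Vert\leq1\text{ and }\Vert x\Vert\leq1\right\}
\]
in the quotient $C^{\ast}_{\text{r}}(G)/\mathbb{C}1$ is totally bounded. It therefore suffices to construct a sequence $(x_{n})_{n}\subseteq\mathbb{C}[G]$ with three properties:
\begin{enumerate}[label=(\roman*)]
\item $\tau(x_{n})=0$ and $\Vert x_{n}\Vert\leq1$;
\item $\sup_{n}\Vert[D_{\ell},x_{n}]\Vert<\infty$;
\item $\Vert x_{n}-x_{m}-c\Vert\geq\varepsilon$ for all $n\neq m$ and all $c\in\mathbb{C}$, for some fixed $\varepsilon>0$.
\end{enumerate}
Since word-length functions for different finite generating sets are bi-Lipschitz equivalent, property (ii) is insensitive to the choice of $S$ up to constants. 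This is also the reason that a single construction handles every finite symmetric generating set $S$. I would therefore fix the standard generating set consisting of the free generators $a_{1}^{\pm1},\dots,a_{d}^{\pm1}$ of $\mathbb{F}_{d}$ together with the lamp $e$ at the identity.

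The first step is an explicit formula for $\ell$. Write $G\ni g=(\omega,w)$ with $\omega\in\bigoplus_{\mathbb{F}_{d}}\mathbb{Z}/2\mathbb{Z}$ and $w\in\mathbb{F}_{d}$. Then $\ell(g)=|\operatorname{supp}\omega|+2|T(\omega,w)|-|w|$, where $T(\omega,w)$ is the subtree of the Cayley tree spanned by $e$, $w$ and $\operatorname{supp}\omega$, and $|T|$ counts its edges. This is the tree version of the travelling salesman description of lamplighter word length.

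Next I would compute $[D_{\ell},\lambda_{f}]\delta_{h}=(\ell(fh)-\ell(h))\delta_{fh}$ for lamp elements $f$. I would take $x_{n}$ to be a normalized, signed combination $\sum_{f}c^{(n)}_{f}\lambda_{f}$ of lamp configurations. These lie in the abelian subalgebra $C^{\ast}(\bigoplus\mathbb{Z}/2)\cong C(\{0,1\}^{\mathbb{F}_{d}})$, where norms are simply sup-norms of the Fourier transform, so the norm bound in (i) and the separation in (iii) reduce to elementary statements about functions on the Cantor set. The key idea for the construction is to use the exponential branching of the tree. I would spread the support of $c^{(n)}$ over $\sim d^{n}$ lamps in pairwise different branches. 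Then, for each fixed $h$, only boundedly many lamps in the support lie close to the geodesic structure of $T(h)$, in a sense to be made precise. For the remaining lamps, the increment $\ell(fh)-\ell(h)$ depends only on $f$ and on the branch point, not on the fine structure of $h$.

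The main obstacle is step (ii): bounding $\Vert[D_{\ell},x_{n}]\Vert$ uniformly. This operator is not diagonal, and the Schur multiplier $(g,h)\mapsto\ell(g)-\ell(h)$ must be controlled in operator norm, not just entrywise. My first attempt would be to decompose $[D_{\ell},x_{n}]$ according to the coset of the cursor position $w$ and the branch containing it. I would then apply the explicit formula for $\ell$ to write the multiplier on each block as (an $h$-independent function of $f$) plus (a bounded error). The $h$-independent part contributes a multiplication-type operator inside the abelian algebra, and I would arrange its norm to be $O(1)$ by choosing the signs $c^{(n)}_{f}$ as a normalized Rademacher/Walsh pattern, so that it telescopes along the tree. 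The bounded error part would then be handled by a Schur test. Finally, I would check (iii) directly, since distinct $x_{n}$ are built from disjoint lamp sets and are thus orthogonal Walsh-type functions whose differences have sup-norm bounded below.
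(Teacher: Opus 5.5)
Your overall frame (Rieffel's criterion, lamp elements supported on a sphere of $\mathcal{T}$, separation via characters of the abelian lamp group) is the same as the paper's, but there are two genuine gaps.

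\textbf{The reduction to the standard generating set is invalid.} Bi-Lipschitz equivalence of $\ell_{S}$ and $\ell_{S'}$ gives no control of $\Vert[D_{\ell_{S}},x]\Vert$ by $\Vert[D_{\ell_{S'}},x]\Vert$. The commutator only sees the increments $\ell(fh)-\ell(h)$, and multiplicative bounds on $\ell$ do not compare these. If your claim held, the question the paper explicitly leaves open (whether the metric property depends on the generating set) would be trivial. The paper instead proves, for arbitrary $S$, the increment bound $|\ell_{S}(\chi_{E\cup\{x\}},g)-\ell_{S}(\chi_{E},g)|\leq C_{S}(1+d_{\mathcal{T}}(x,\mathcal{H}(E,g)))$. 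It does this by localizing a geodesic word in $S$ near the hull (Lemma~\ref{ContainmentLemma} and Proposition~\ref{WordLengthEstimate}). This matters for your plan specifically: you want to exploit the exact travelling-salesman formula and sign cancellations, and nothing of that kind survives when only a coarse upper bound on increments is available.

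\textbf{Step (ii), the whole content of the theorem, is not carried out, and the mechanism you sketch cannot work.} Take $f$ the toggle at $x\in\mathcal{S}_{n}$ and $h=(\chi_{E},g)$. For the standard generators the increment is $\pm\bigl(1+2d_{\mathcal{T}}(x,\mathcal{H}_{x}(E,g))\bigr)$.
\begin{itemize}
\item Its sign depends on whether the lamp at $x$ is lit in $h$.
\item Its size ranges over $1,\ldots,2n+1$ according to where the data of $h$ branch off $[e,x]_{\mathcal{T}}$.
\end{itemize}
So there is no splitting into an $h$-independent function of $f$ plus a bounded error, and the commutator does not lie in the lamp algebra. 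An unweighted Schur test also cannot work: with coefficients $\pm(\#\mathcal{S}_{n})^{-1}$, the column through $\delta_{e}$ already sums to $2n+1$. Moreover, $h$ may have lamps near every support point, so ``only boundedly many close lamps'' is false, though harmless.

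Signs are beside the point. The paper uses the plain average $z_{r}$ and dominates $|[D_{\ell},z_{r}]\xi|$ entrywise by $C_{S}B_{r}|\xi|$ with a positive operator $B_{r}$, so every sign pattern obeys the same bound.

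The missing idea is the $\ell^{2}$-gain from branching.
\begin{itemize}
\item Lemma~\ref{Identity} writes $d_{\mathcal{T}}(x,\mathcal{H}_{x}(E,g))$ as a sum over the prefixes $v_{k}(x)$ of indicators that $\mathcal{T}_{v_{k}(x)}$ contains neither the cursor $g$ nor any lamp of $E$ other than $x$.
\item This reorganizes the unbounded part as $\sum_{k}\sum_{v\in\mathcal{S}_{k}}Q_{v,r}$.
\item Each $Q_{v,r}$ is a star-graph type operator linking ``$\mathcal{T}_{v}$ empty'' with ``exactly one lamp in $\mathcal{T}_{v}$, located on $\mathcal{S}_{r}$''. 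Its norm is therefore $\sqrt{\#(\mathcal{S}_{r}\cap\mathcal{T}_{v})}=(2d-1)^{(r-k)/2}$ rather than $(2d-1)^{r-k}$ (Lemma~\ref{QDefinition}).
\item After dividing by $\#\mathcal{S}_{r}$, level $k$ contributes $(2d-1)^{(k-r)/2}$, a convergent geometric series.
\end{itemize}
Without an estimate of this type, (ii) is not established.
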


The proof of Theorem \ref{MainTheorem} employs the construction of suitable averaging operators $(z_{r})_{r\geq1}\in\mathbb{C}[G]$ of the lamp toggles at points of the sphere $\mathcal{S}_{r}$ of radius $r\geq1$ inside the Cayley graph of $\mathbb{F}_{d}$, which is independent of the choice of the finite symmetric generating set $S$. A localization argument for words in $S$ can then be used to prove that $\Vert[D_{\ell},z_{r}]\Vert$ is uniformly bounded in $r$. On the other hand, the Abelian structure of the lamp subgroup dictates that the set $\{z_{r}\mid r\geq1\}$ is uniformly separated.

Theorem \ref{MainTheorem} naturally raises several questions that point toward interesting directions for future research. For instance, it remains open whether the spectral metric space property of word-length spectral triples associated with reduced group C$^{\ast}$-algebras of finitely generated groups is independent of the choice of the generating set. A similar problem regarding the finite-diameter property was posed in Rieffel's foundational paper \cite{Rieffel02}. Furthermore, it is natural to investigate whether there exist finitely generated amenable or rapid decay groups that fail to yield compact quantum metric spaces.

\vspace{3mm}


\subsection*{Acknowledgements}

The author acknowledges the use of GPT-5.6 Sol as an exploratory tool to assist in finding the counterexample. The AI was used under the author's strict mathematical guidance. All mathematical content and arguments were rigorously reviewed, verified, and substantially revised by the author, who assumes full responsibility for the final manuscript. Finally, the author acknowledges Oliver Niehues for his invaluable perspective and general encouragement.

\vspace{5mm}


\section{Preliminaries and notation}

\vspace{3mm}

\subsection{General notation}

We write $\mathbb{N}:=\{0,1,\ldots\}$ and $\mathbb{N}_{\geq 1}:=\{1,2,\ldots\}$. For a statement \(P\), we set \(\delta(P)=1\) if \(P\) is true and \(\delta(P)=0\) otherwise. Given a set \(E\), we denote its cardinality by \(\#E\), its power set by \(\mathcal{P}(E)\), and its characteristic function by \(\chi_E\).

For operators \(x\) and \(y\), their commutator is denoted by $[x,y]:=xy-yx$. The identity element of a group is always denoted by \(e\). Finally, for a graph \(\Gamma\), we denote its vertex and edge sets by \(V\Gamma\) and \(E\Gamma\), respectively.

\vspace{3mm}


\subsection{Spectral triples and compact quantum metric spaces}

Spectral triples, introduced by Connes in \cite{Connes89}, constitute a fundamental concept within the framework of non-commutative geometry.

\begin{definition} Let $A$ be a separable unital C$^{\ast}$-algebra. A\emph{ spectral triple} $(\mathcal{A},\mathcal{H},D)$ on $A$ consists of a $\ast$-representation $\pi:A\rightarrow\mathcal{B}(\mathcal{H})$, a norm dense unital $\ast$-subalgebra $\mathcal{A}$ of $A$ and a densely defined self-adjoint operator $D$ on $\mathcal{H}$ such that $(1+D^{2})^{-\frac{1}{2}}$ is compact, and such that for every $a\in\mathcal{A}$, the domain of $D$ is invariant under $\pi(a)$ and the commutator $[D,\pi(a)]$ is bounded. The operator $D$ is referred to as the triple's \emph{Dirac operator}. \end{definition}

Following Connes \cite{Connes89}, any spectral triple $(\mathcal{A},\mathcal{H},D)$ endows $\mathcal{A}$ with a \emph{Lipschitz semi-norm} $L_{D}$, defined by $L_{D}(a):=\left\Vert [D,\pi(a)]\right\Vert$. Formally, $L_{D}:\mathcal{A}\rightarrow\mathbb{R}_{+}$ is a semi-norm whose domain is a dense subspace of $A$ containing the identity $1$, and satisfying $L_{D}(1)=0$. It induces a pseudo-metric $d_{D}:\mathcal{S}(A)\times\mathcal{S}(A)\rightarrow[0,\infty]$ on the state space $\mathcal{S}(A)$ of $A$, given by 
\[
d_{D}(\psi,\psi^{\prime}):=\sup\left\{ |\psi(a)-\psi^{\prime}(a)|\;\middle|\;a\in\mathcal{A}\text{ with }\Vert[D,a]\Vert\leq1\right\} .
\]
Observe that $d_{D}$ may assume the value $+\infty$.

A fundamental problem within this framework is to determine when the metric topology induced by $d_{D}$ on $\mathcal{S}(A)$ coincides with the weak$^{\ast}$-topology (see \cite{Rieffel98,Rieffel99}). This is the defining property of a compact quantum metric space. A necessary condition for this to hold is that the spectral triple $(\mathcal{A},\mathcal{H},D)$ is \emph{non-degenerate} in the sense that the representation $\pi$ is faithful, and the commutator $[D,\pi(a)]$ must vanish if and only if $a\in\mathbb{C}1$. Whenever the representation is faithful, it is customary to suppress $\pi$ in the notation, viewing both $\mathcal{A}$ and $A$ as $\ast$-subalgebras of $\mathcal{B}(\mathcal{H})$.

\begin{definition}[{\cite[Definition 5.1]{Rieffel99} and \cite[Definition 2.2]{Rieffel04}}] Let $(\mathcal{A},\mathcal{H},D)$ be a non-degenerate spectral triple, and define $L_{D}$ and $d_{D}$ as above. If the metric topology induced by $d_{D}$ on the state space $\mathcal{S}(A)$ coincides with the weak$^{\ast}$-topology, then $L_{D}$ is called a \emph{Lip-norm}. In this case, the pair $(A,L_{D})$ is called a \emph{compact quantum metric space}, and the triple $(\mathcal{A},\mathcal{H},D)$ is referred to as a \emph{spectral metric space} (or a \emph{metric spectral triple}). \end{definition}

Building on \cite{Rieffel98}, the following convenient characterization was established by Ozawa and Rieffel.

\begin{proposition}[{\cite[Proposition 1.3]{OzawaRieffel}}] \label{Characterization} Let $(\mathcal{A},\mathcal{H},D)$ be a non-degenerate spectral triple over a C$^{\ast}$-algebra $A$, and let $L_{D}$ be defined as above. Then the pair $(A,L_{D})$ defines a compact quantum metric space if and only if for every state $\phi$ on $A$ the set
\[
\{a\in\mathcal{A}\mid L_{D}(a)\leq1\text{ and }\phi(a)=0\}
\]
is totally bounded in $A$. \end{proposition}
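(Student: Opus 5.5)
The plan is to reduce both directions to the standard picture in which elements of $\mathcal{A}$ are viewed as functions on $\mathcal{S}(A)$ via $a\mapsto(\psi\mapsto\psi(a))$, and then to use compactness of $\mathcal{S}(A)$ together with an Arzelà--Ascoli argument. Two preliminary observations are needed throughout.

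First, the Lipschitz seminorm is invariant under adjoints. Since $D$ is self-adjoint, $[D,a^{\ast}]=-[D,a]^{\ast}$ on the domain of $D$, so $L_{D}(a^{\ast})=L_{D}(a)$. Consequently, for $b$ with $L_{D}(b)\leq1$ and $\phi(b)=0$, the real and imaginary parts $b_{1}=\frac{1}{2}(b+b^{\ast})$ and $b_{2}=\frac{1}{2i}(b-b^{\ast})$ again satisfy $L_{D}(b_{j})\leq1$. They also satisfy $\phi(b_{j})=0$, because states are hermitian. Writing $B_{\phi}$ for the set in the statement and $B_{\phi}^{sa}$ for its self-adjoint part, we therefore get $B_{\phi}\subseteq B_{\phi}^{sa}+iB_{\phi}^{sa}$. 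Hence it suffices to control $B_{\phi}^{sa}$.

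Second, the supremum defining $d_{D}$ may be taken over $B_{\phi}$. If $L_{D}(a)\leq1$, then $a-\phi(a)1\in B_{\phi}$, because $L_{D}(1)=0$. Moreover $\psi(a)-\psi^{\prime}(a)=\psi(b)-\psi^{\prime}(b)$ for $b=a-\phi(a)1$. Thus $d_{D}(\psi,\psi^{\prime})=\sup_{b\in B_{\phi}}|\psi(b)-\psi^{\prime}(b)|$.

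For ($\Leftarrow$), assume $B_{\phi}$ is totally bounded for some (equivalently every) state $\phi$.
\begin{itemize}
\item \emph{$d_{D}$ is finite.} Total boundedness makes $B_{\phi}$ norm-bounded, say by $R$, so $d_{D}\leq2R$.
\item \emph{Weak$^{\ast}$ convergence implies $d_{D}$-convergence.} Let $\psi_{\lambda}\to\psi$ weak$^{\ast}$ and fix $\varepsilon>0$. Choose a finite $\varepsilon$-net $b_{1},\dots,b_{n}$ of $B_{\phi}$. Since states have norm one,
\[
d_{D}(\psi_{\lambda},\psi)\leq2\varepsilon+\max_{i}|\psi_{\lambda}(b_{i})-\psi(b_{i})|,
\]
and the maximum tends to $0$. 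So the identity map $(\mathcal{S}(A),\text{weak}^{\ast})\to(\mathcal{S}(A),d_{D})$ is continuous.
\item \emph{$d_{D}$ is a genuine metric.} If $d_{D}(\psi,\psi^{\prime})=0$, then by rescaling $\psi$ and $\psi^{\prime}$ agree on every $a\in\mathcal{A}$, since every such $a$ has finite $L_{D}(a)$. Density of $\mathcal{A}$ in $A$ then gives $\psi=\psi^{\prime}$.
\item \emph{Conclusion.} A continuous bijection from the weak$^{\ast}$-compact space $\mathcal{S}(A)$ onto a Hausdorff space is a homeomorphism.
\end{itemize}

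For ($\Rightarrow$), assume $d_{D}$ metrizes the weak$^{\ast}$ topology.
\begin{itemize}
\item \emph{Finite diameter.} Then $(\mathcal{S}(A),d_{D})$ is a compact metric space, so it has finite diameter $R$.
\item \emph{Uniform boundedness.} For $b\in B_{\phi}^{sa}$ and every state $\psi$ we have $|\psi(b)|=|\psi(b)-\phi(b)|\leq d_{D}(\psi,\phi)\leq R$. Hence $\Vert b\Vert=\sup_{\psi}|\psi(b)|\leq R$.
\item \emph{Isometric embedding.} The map $b\mapsto\hat{b}$, $\hat{b}(\psi)=\psi(b)$, embeds $B_{\phi}^{sa}$ isometrically into $C(\mathcal{S}(A))$ with the sup norm. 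This uses that the norm of a self-adjoint element is attained on states.
\item \emph{Arzelà--Ascoli.} Each $\hat{b}$ is $1$-Lipschitz for $d_{D}$, so the family $\{\hat{b}\}$ is uniformly bounded and equicontinuous on a compact metric space. Arzelà--Ascoli gives total boundedness in $C(\mathcal{S}(A))$, hence of $B_{\phi}^{sa}$ in $A$, and then of $B_{\phi}\subseteq B_{\phi}^{sa}+iB_{\phi}^{sa}$.
\end{itemize}

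The main point requiring care is the self-adjoint reduction: we need both that $L_{D}$ is $\ast$-invariant and that norms of self-adjoint elements are computed by states. The latter is exactly what makes the embedding into $C(\mathcal{S}(A))$ isometric, and it is used in both directions. The remaining steps are routine topology.
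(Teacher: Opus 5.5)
The paper gives no proof of this proposition; it cites it from \cite{OzawaRieffel}, where it rests on Rieffel's Theorem~1.8 in \cite{Rieffel98}. Your argument is essentially that standard proof, written out in full: reduce to self-adjoint elements via $L_{D}(a^{\ast})=L_{D}(a)$; obtain ($\Leftarrow$) from finite $\varepsilon$-nets plus the compact-to-Hausdorff principle; obtain ($\Rightarrow$) from Arzel\`a--Ascoli applied to the $1$-Lipschitz functions $\hat{b}$ on $(\mathcal{S}(A),d_{D})$. These steps are correct.

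There is one step in ($\Rightarrow$) that you assert but should justify. You say that $(\mathcal{S}(A),d_{D})$ is a compact metric space and therefore has finite diameter. However, $d_{D}$ may a priori take the value $+\infty$, as the paper notes explicitly. A compact extended metric space can have infinite diameter: take two points at distance $+\infty$. The fix is short:
\begin{itemize}
\item Fix $\psi$ and let $U=\{\chi\in\mathcal{S}(A)\mid d_{D}(\psi,\chi)<\infty\}$.
\item By the triangle inequality, both $U$ and its complement are unions of $d_{D}$-balls of radius $1$. So $U$ is $d_{D}$-clopen, hence weak$^{\ast}$-clopen by hypothesis.
\item $\mathcal{S}(A)$ is convex, hence weak$^{\ast}$-connected, and $\psi\in U$. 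Therefore $U=\mathcal{S}(A)$.
\end{itemize}
Only after this does compactness give a finite diameter $R$.

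A minor correction: your closing remark says that computing norms of self-adjoint elements on states is used in both directions. It is not; ($\Leftarrow$) needs neither that fact nor the self-adjoint reduction.
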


\vspace{3mm}


\section{Word-lengths on $(\mathbb{Z}/2\mathbb{Z})\wr\mathbb{F}_{d}$}
\vspace{3mm}

Consider the Lamplighter group
\begin{eqnarray*}
G:=(\mathbb{Z}/2\mathbb{Z})\wr\mathbb{F}_{d}=\left(\bigoplus_{\mathbb{F}_{d}}\mathbb{Z}/2\mathbb{Z}\right)\rtimes\mathbb{F}_{d} \quad \text{ with } d \geq 2,
\end{eqnarray*}
where $\mathbb{F}_{d}$ is the free group of rank $d \geq 2$, freely generated by elements $a_1, \ldots, a_d$. Furthermore, let $\mathcal{F}(\mathbb{F}_{d})$ be the set of finite subsets of $\mathbb{F}_{d}$. For notational convenience, we usually express an element $\mathbf{x}=(x_{g})_{g\in\mathbb{F}_{d}}\in\bigoplus_{\mathbb{F}_{d}}\mathbb{Z}/2\mathbb{Z}$ as $\mathbf{x}=\chi_{F}$ with $F:=\{g\in\mathbb{F}_{d}\mid x_{g}=1\}\in\mathcal{F}(\mathbb{F}_{d})$, so that
\[
(\chi_{E},g)(\chi_{F},h)=(\chi_{E\bigtriangleup gF},gh)\quad\text{ for }E,F\in\mathcal{F}(\mathbb{F}_{d}),g,h\in\mathbb{F}_{d}.
\]
Here, $E\bigtriangleup F:=(E\cup F)\setminus(E\cap F)$ denotes the symmetric difference of the sets $E,F\in\mathcal{F}(\mathbb{F}_{d})$.

Given a finite symmetric generating set $S$ of $G$, we write $\ell_{S}$ for the associated word-length function on $G$.

Let $\mathcal{T}:=\text{Cay}(\mathbb{F}_{d},S_{0})$ be the (undirected) Cayley graph of $\mathbb{F}_{d}$ with respect to the symmetric generating set $S_{0}:=\{a_1,{a_1}^{-1},\ldots, a_d,{a_d}^{-1}\}$. Then $\mathcal{T}$ is a $2d$-regular tree, and we denote its vertex set and its edge set by $V\mathcal{T}=\mathbb{F}_{d}$ and $E\mathcal{T}$, respectively. We view $\mathcal{T}$ as a discrete metric space with the graph metric $d_{\mathcal{T}}$. Observe that $\#\mathcal{S}_{r}=2d(2d-1)^{r-1}$ for every $r\geq1$, where 
\[
\mathcal{S}_{r}:=\left\{ g\in\mathbb{F}_{d}\mid d_{\mathcal{T}}(e,g)=r\right\} .
\]

For $E\in\mathcal{F}(\mathbb{F}_{d})$ and $g\in\mathbb{F}_{d}$, we write $\mathcal{H}(E,g)$ for the smallest subtree of $\mathcal{T}$ containing the set $E\cup\{e,g\}$. For elements $g,h\in\mathbb{F}_{d}$, we write $[g,h]_{\mathcal{T}}\subseteq V\mathcal{T}$ for the unique geodesic segment in $\mathcal{T}$ connecting $g$ and $h$. Finally,
\[
N_{R}(E):=\left\{ x\in V\mathcal{T}\mid d_{\mathcal{T}}(x,E) \leq R\right\} \quad \text{ for }E\subseteq\mathbb{F}_{d},R\geq0,
\]
where $d_{\mathcal{T}}(x,E):=\inf\{d_{\mathcal{T}}(x,y)\mid y\in E\}$.

\begin{lemma} \label{ContainmentLemma} Given a finite symmetric generating set $S$ of $G$, express each generator $s\in S$ uniquely in the form $s=(\chi_{E_{s}},g_{s})$ with $E_{s}\in\mathcal{F}(\mathbb{F}_{d})$, $g_{s}\in\mathbb{F}_{d}$. Then the constant 
\[
R_{S}:=\max\left\{ d_{\mathcal{T}}(e,g)\mid s\in S,g\in E_{s}\right\} 
\]
 has the following property: Let $E\in\mathcal{F}(\mathbb{F}_{d})$, $g\in\mathbb{F}_{d}$, and let $(\chi_{E_{1}},g_{1}),\ldots,(\chi_{E_{n}},g_{n})\in S$ be elements with $(\chi_{E},g)=(\chi_{E_{1}},g_{1})\cdots(\chi_{E_{n}},g_{n})$. Then the subset
\[
P(E,g):=\bigcup_{i=1}^{n}[g_{1}\cdots g_{i-1},g_{1}\cdots g_{i}]_{\mathcal{T}}\subseteq V\mathcal{T}
\]
satisfies $\mathcal{H}(E,g)\subseteq N_{R_{S}}(P(E,g))$. \end{lemma}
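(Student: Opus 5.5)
Unwinding the product formula will show that $E$ is a union of translated lamp sets. For each prefix $h_{i}:=g_{1}\cdots g_{i}$ (with $h_{0}=e$), an induction on $n$ using $(\chi_{E},g)(\chi_{F},h)=(\chi_{E\bigtriangleup gF},gh)$ gives
\[
E=\bigtriangleup_{i=1}^{n}h_{i-1}E_{i}\subseteq\bigcup_{i=1}^{n}h_{i-1}E_{i},\qquad g=h_{n}.
\]
Every point $x\in E$ therefore has the form $x=h_{i-1}y$ with $y\in E_{i}$, where $(\chi_{E_i},g_i)\in S$. Because the tree metric $d_{\mathcal{T}}$ is left-invariant, $d_{\mathcal{T}}(h_{i-1},x)=d_{\mathcal{T}}(e,y)\leq R_{S}$. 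Since $h_{i-1}\in[h_{i-1},h_{i}]_{\mathcal{T}}\subseteq P(E,g)$, we obtain $E\subseteq N_{R_{S}}(P(E,g))$.

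Next I will show that $P(E,g)$ is connected and contains $e$ and $g$. Consecutive geodesic segments $[h_{i-1},h_{i}]_{\mathcal{T}}$ share the endpoint $h_{i}$, so their union is a connected subset of the tree (the induced subgraph on these vertices is connected). It contains $h_{0}=e$ and $h_{n}=g$. One degenerate case needs a check: when $n=0$ we have $E=\emptyset$, $g=e$, and $P$ is empty, so I will use the convention $P=\{e\}$ (or note that $\mathcal{H}(\emptyset,e)=\{e\}$ and handle it directly).

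The main obstacle is the passage from $E\cup\{e,g\}\subseteq N_{R_{S}}(P)$ to $\mathcal{H}(E,g)\subseteq N_{R_{S}}(P)$. The neighbourhood of an arbitrary set need not contain convex hulls, so this step depends on $P$ being connected. I will prove that for a connected subset $P$ of a tree, $N_{R}(P)$ is convex, i.e. it contains the geodesic between any two of its points. Take $x,y\in N_{R}(P)$ with nearest points $p,q\in P$, so $d(x,p),d(y,q)\leq R$. Since $P$ is connected, the geodesic $[p,q]_{\mathcal{T}}$ lies in $P$ (in a tree, the unique path between two vertices of a connected subgraph stays inside it). In a tree, the geodesic $[x,y]$ is contained in $[x,p]\cup[p,q]\cup[q,y]$. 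Points on $[p,q]$ lie in $P$, and points on $[x,p]$ or $[q,y]$ are within distance $R$ of $p$ or $q$ respectively. Hence $[x,y]\subseteq N_{R}(P)$.

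Convexity then finishes the proof. The smallest subtree containing a finite set $X$ is the union of the geodesics between pairs of points of $X$ (indeed, between a fixed base point and the others), so any convex set containing $X$ contains $\mathcal{H}(E,g)$. Applying this with $X=E\cup\{e,g\}$ and the convex set $N_{R_{S}}(P(E,g))$ yields $\mathcal{H}(E,g)\subseteq N_{R_{S}}(P(E,g))$.
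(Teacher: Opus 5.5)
Your proof is correct and follows the paper's argument: expand the product to get $E\subseteq\bigcup_{i}h_{i-1}E_{i}\subseteq N_{R_{S}}(P(E,g))$, then use that the $R_{S}$-neighbourhood of the connected set $P(E,g)$ is geodesically convex and contains $E\cup\{e,g\}$, hence contains $\mathcal{H}(E,g)$. You also prove the connectedness of $P(E,g)$ and the convexity of its neighbourhood, which the paper only asserts. Your $n=0$ remark is apt: with the literal definitions $N_{R_{S}}(\emptyset)=\emptyset$, so the convention $P=\{e\}$ (or the assumption $n\geq1$, which the paper makes when it applies the lemma) is genuinely needed.
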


\begin{proof}  By
\[
(\chi_{E},g)=(\chi_{E_{1}},g_{1})\cdots(\chi_{E_{n}},g_{n})=(\chi_{E_{1}\bigtriangleup g_{1}E_{2}\bigtriangleup\cdots\bigtriangleup(g_{1}\cdots g_{n-1})E_{n}},g_{1}\cdots g_{n}),
\]
an element $h\in E$ must occur in an odd number of the sets $E_{1},g_{1}E_{2},\ldots,(g_{1}\cdots g_{n-1})E_{n}$, so that in particular $h\in(g_{1}\cdots g_{i-1})E_{i}$ for at least one $1\leq i\leq n$. But then $d_{\mathcal{T}}(h,g_{1}\cdots g_{i-1})\leq R_{S}$, and hence $E\subseteq N_{R_{S}}(P(E,g))$.

The closed $R_{S}$-neighbourhood of a connected subtree of a tree is again a connected subtree, and is therefore geodesically convex. It follows that $N_{R_{S}}(P(E,g))$ is geodesically convex. Furthermore, $N_{R_{S}}(P(E,g))$ contains both $E$ and the vertices $e$ and $g$. By definition, $\mathcal{H}(E,g)$ is the smallest subtree of $\mathcal{T}$ containing the set $E\cup\{e,g\}$; it follows that $\mathcal{H}(E,g)\subseteq N_{R_{S}}(P(E,g))$, as claimed. \end{proof}

\begin{remark} Observe that the definition of $P(E,g)$ in Lemma \ref{ContainmentLemma} depends on the choice of the factorization of the element $(\chi_{E},g)\in G$ in terms of the generating set $S$; for notational convenience we suppress this dependence in the notation. \end{remark}

The following proposition permits working with an arbitrary finite generating set $S$ in the later arguments. Note that it does not compare $\ell_{S}$ with another word-length at the level of commutator seminorms, but uses the finite range of the lamp configurations and base displacements occurring in the generators in $S$.

\begin{proposition} \label{WordLengthEstimate} For every finite symmetric generating set $S$ of $G$, there exists a constant $C_{S}>0$ such that
\[
\left|\ell_{S}(\chi_{E\cup\{x\}},g)-\ell_{S}(\chi_{E},g)\right|\leq C_{S}\left(1+d_{\mathcal{T}}(x,\mathcal{H}(E,g))\right)
\]
for every $E\in\mathcal{F}(\mathbb{F}_{d})$, $g\in\mathbb{F}_{d}$, and $x\in\mathbb{F}_{d}\setminus E$. \end{proposition}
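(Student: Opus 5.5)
Since $x\notin E$, we have $(\chi_{E\cup\{x\}},g)=(\chi_{\{x\}},e)(\chi_{E},g)$, and $(\chi_{E},g)=(\chi_{\{x\}},e)(\chi_{E\cup\{x\}},g)$. By the triangle inequality for word length,
\[
\left|\ell_{S}(\chi_{E\cup\{x\}},g)-\ell_{S}(\chi_{E},g)\right|\leq\ell_{S}(\chi_{\{x\}},e),
\]
but this only yields the bound $C(1+d_{\mathcal{T}}(e,x))$, which is too weak. The plan is to do a localized surgery on a geodesic word instead. I will prove each one-sided inequality separately. In both cases one starts from a geodesic factorization of one of the two elements and inserts a short detour that toggles the lamp at $x$. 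The key input is Lemma \ref{ContainmentLemma}, which says the base trajectory $P$ of any factorization passes within $R_S$ of every vertex of $\mathcal{H}$.

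Fix constants depending only on $S$. Since $S$ generates $G$, each $(\chi_\emptyset,a)$ with $a\in S_0$ and the toggle $t=(\chi_{\{e\}},e)$ have $\ell_S$-length at most some $K$. Hence, for any $y\in\mathbb{F}_d$ and $z$ adjacent or equal to $y$, the element $(\chi_{\emptyset},y^{-1})(\chi_{\{z\}},e)(\chi_\emptyset,y)$ has length at most $K(2d_{\mathcal{T}}(y,z)+1)$. Also set $M:=\max_{s\in S}d_{\mathcal{T}}(e,g_s)$.

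For the direction $\ell_S(\chi_{E\cup\{x\}},g)\le\ell_S(\chi_E,g)+C(1+\delta)$, where $\delta=d_{\mathcal{T}}(x,\mathcal{H}(E,g))$, the steps are as follows.
\begin{enumerate}
\item Take a geodesic factorization $s_1\cdots s_n$ of $(\chi_E,g)$ with $n=\ell_S(\chi_E,g)$.
\item Since $\mathcal{H}(E,g)\subseteq N_{R_S}(P(E,g))$, there is a vertex $p$ on some segment $[g_1\cdots g_{i-1},g_1\cdots g_i]_{\mathcal{T}}$ with $d_{\mathcal{T}}(x,p)\le\delta+R_S$. Consequently the prefix position $y:=g_1\cdots g_{i-1}$ satisfies $d_{\mathcal{T}}(x,y)\le\delta+R_S+M$.
\item Insert, between $s_{i-1}$ and $s_i$, a word for $(\chi_{\{y^{-1}x\}},e)$. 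Conjugating the toggle along the geodesic from $e$ to $y^{-1}x$ gives this word length at most $K(2(\delta+R_S+M)+1)$.
\item Check the product: the inserted factor contributes $y\cdot\{y^{-1}x\}=\{x\}$ to the lamp set and does not change the base. The product is therefore $(\chi_{E\triangle\{x\}},g)=(\chi_{E\cup\{x\}},g)$.
\end{enumerate}
This gives the inequality with $C_S:=K(2R_S+2M+1)+2K$, say.

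For the reverse direction, start from a geodesic factorization of $(\chi_{E\cup\{x\}},g)$ and apply Lemma \ref{ContainmentLemma} to it. Its path lies within $R_S$ of $\mathcal{H}(E\cup\{x\},g)$, which contains $x$ itself. Hence some prefix position $y$ has $d_{\mathcal{T}}(x,y)\le R_S+M$, and inserting the conjugated toggle at $x$ costs at most a constant. This gives $\ell_S(\chi_E,g)\le\ell_S(\chi_{E\cup\{x\}},g)+C$, which is even stronger than needed.

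The main obstacle is the bookkeeping of the inserted conjugate: one must place it at the right position in the word and verify via the multiplication rule $(\chi_A,u)(\chi_B,v)=(\chi_{A\triangle uB},uv)$ that exactly the lamp at $x$ is toggled. One must also make sure the distance from $x$ to a prefix endpoint, rather than to an interior point of a segment, is controlled by $\delta+R_S+M$. Both points are routine once $M$ is introduced.
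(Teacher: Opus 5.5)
Your proof is correct and follows the paper's argument. In both directions you take a geodesic factorization, find a prefix position $y=g_{1}\cdots g_{i-1}$ near $x$ (at distance at most $\delta+R_S+M$ via Lemma \ref{ContainmentLemma} in the first direction), and insert the conjugated toggle $(\chi_{\{y^{-1}x\}},e)$, whose length is at most $K(2d_{\mathcal{T}}(x,y)+1)$.

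In the reverse direction there is a slip: you write that the path lies within $R_S$ of $\mathcal{H}(E\cup\{x\},g)$, but the lemma gives the opposite containment $\mathcal{H}(E\cup\{x\},g)\subseteq N_{R_S}(P)$, which is the one your conclusion actually uses. The paper avoids the extra $M$ there by getting $d_{\mathcal{T}}(x,g_{t_1}\cdots g_{t_{j-1}})\leq R_S$ directly from the parity observation that $x\in(g_{t_1}\cdots g_{t_{j-1}})E_{t_j}$ for some $j$.
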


\begin{proof} As before, express each generator $s\in S$ uniquely in the form $s=(\chi_{E_{s}},g_{s})$ with $E_{s}\in\mathcal{F}(\mathbb{F}_{d})$, $g_{s}\in\mathbb{F}_{d}$, and set 
\begin{eqnarray*}
J_{S}:&=&\max\left\{ d_{\mathcal{T}}(e,g_{s})\mid s\in S\right\} ,\\
c_{S}:&=& \max\left\{
\ell_{S}(\chi_{\emptyset},a_i),
\ell_{S}(\chi_{\emptyset},a_i^{-1})
\;\middle|\;1\leq i\leq d
\right\},\\
d_{S}:&=&\ell_{S}(\chi_{\{e\}},e).
\end{eqnarray*}
Let $s_{1},\ldots,s_{m}\in S$ be elements with $(\chi_{E},g)=s_{1}\cdots s_{m}$, where $m=\ell_S(\chi_{E},g)$. We may assume that $m\geq1$. Set $r:=d_{\mathcal{T}}(x,\mathcal{H}(E,g))$ and choose $y\in\mathcal{H}(E,g)$ with $d_{\mathcal{T}}(x,y)=r$. By Lemma \ref{ContainmentLemma}, there exists a point $z\in\bigcup_{i=1}^{m}[g_{s_{1}}\cdots g_{s_{i-1}},g_{s_{1}}\cdots g_{s_{i}}]_{\mathcal{T}}$ with 
\[
d_{\mathcal{T}}(y,z)\leq R_{S}=\max\left\{ d_{\mathcal{T}}(e,h)\mid s\in S,h\in E_{s}\right\} .
\]
Since $z$ is contained in one of the segments $[g_{s_{1}}\cdots g_{s_{i-1}},g_{s_{1}}\cdots g_{s_{i}}]_{\mathcal{T}}$, whose length is at most $J_{S}$, we have
\[
d_{\mathcal{T}}(x,g_{s_{1}}\cdots g_{s_{i-1}})\leq d_{\mathcal{T}}(x,y)+d_{\mathcal{T}}(y,z)+d_{\mathcal{T}}(z,g_{s_{1}}\cdots g_{s_{i-1}})\leq r+R_{S}+J_{S}.
\]
Since $x\notin E$, we can express $(\chi_{E\cup\{x\}},g)$ as 
\begin{eqnarray*}
(\chi_{E\cup\{x\}},g) &=& (\chi_{E\bigtriangleup\{x\}},g)=(\chi_{\{x\}},e)(\chi_{E_{s_1}},g_{s_{1}})\cdots(\chi_{E_{s_m}},g_{s_{m}}) \\
&=& \left((\chi_{E_{s_1}},g_{s_{1}})\cdots(\chi_{E_{s_{i-1}}},g_{s_{i-1}})\right)(\chi_{\{(g_{s_{1}}\cdots g_{s_{i-1}})^{-1}x\}},e)\left((\chi_{E_{s_i}},g_{s_{i}})\cdots(\chi_{E_{s_m}},g_{s_{m}})\right),
\end{eqnarray*}
so that
\begin{eqnarray}
\nonumber
\ell_{S}(\chi_{E\cup\{x\}},g)-\ell_{S}(\chi_{E},g) &\leq& \ell_{S}(\chi_{\{(g_{s_{1}}\cdots g_{s_{i-1}})^{-1}x\}},e)\\
\nonumber
&\leq& 2\ell_{S}\left(\chi_{\emptyset},(g_{s_{1}}\cdots g_{s_{i-1}})^{-1}x\right)+\ell_{S}(\chi_{\{e\}},e)\\
\nonumber
&\leq&2c_{S}d_{\mathcal{T}}(g_{s_{1}}\cdots g_{s_{i-1}},x)+d_{S}\\
&\leq& 2c_{S}(r+R_{S}+J_{S})+d_{S}.\label{eq:FirstInequality}
\end{eqnarray}

Conversely, let $t_{1},\ldots,t_{n}\in S$ be elements with 
\[
(\chi_{E\cup\{x\}},g)=t_{1}\cdots t_{n}=(\chi_{E_{t_{1}}\bigtriangleup g_{t_{1}}E_{t_{2}}\bigtriangleup\cdots\bigtriangleup(g_{t_{1}}\cdots g_{t_{n-1}})E_{t_{n}}},g_{t_{1}}\cdots g_{t_{n}}),
\]
where $n= \ell_S (\chi_{E\cup\{x\}},g)$. Since $x$ belongs to $E\cup\{x\}$, it occurs in an odd number of the sets $E_{t_{1}},g_{t_{1}}E_{t_{2}},\ldots,(g_{t_{1}}\cdots g_{t_{n-1}})E_{t_{n}}$. In particular, there exists $1\leq j\leq n$ with $x\in(g_{t_{1}}\cdots g_{t_{j-1}})E_{t_{j}}$, so that $(g_{t_{1}}\cdots g_{t_{j-1}})^{-1}x\in E_{t_{j}}$, and hence
\[
d_{\mathcal{T}}(x,g_{t_{1}}\cdots g_{t_{j-1}})\leq\max\left\{ d_{\mathcal{T}}(e,h)\mid s\in S,h\in E_{s}\right\} =R_{S}.
\]
By a similar calculation as before,
\begin{eqnarray*}
(\chi_{E},g) &=& (\chi_{\{x\}},e)(\chi_{E\cup\{x\}},g)=(\chi_{\{x\}},e)(\chi_{E_{t_{1}}},g_{t_{1}})\cdots(\chi_{E_{t_{n}}},g_{t_{n}})\\
&=& \left((\chi_{E_{t_{1}}},g_{t_{1}})\cdots(\chi_{E_{t_{j-1}}},g_{t_{j-1}})\right)(\chi_{\{(g_{t_{1}}\cdots g_{t_{j-1}})^{-1}x\}},e)\left((\chi_{E_{t_{j}}},g_{t_{j}})\cdots(\chi_{E_{t_{n}}},g_{t_{n}})\right),
\end{eqnarray*}
so that 
\begin{eqnarray}
\nonumber
\ell_{S}(\chi_{E},g)-\ell_{S}(\chi_{E\cup\{x\}},g) &\leq& \ell_{S}(\chi_{\{(g_{t_{1}}\cdots g_{t_{j-1}})^{-1}x\}},e) \\
\nonumber
&\leq& 2\ell_{S}\left(\chi_{\emptyset},(g_{t_{1}}\cdots g_{t_{j-1}})^{-1}x\right)+\ell_{S}(\chi_{\{e\}},e) \\
\nonumber
&\leq& 2c_{S}d_{\mathcal{T}}(x, g_{t_{1}}\cdots g_{t_{j-1}})+\ell_{S}(\chi_{\{e\}},e)\\
&\leq& 2c_{S}R_{S}+d_{S}.\label{eq:SecondInequality}
\end{eqnarray}

Combining (\ref{eq:FirstInequality}) and (\ref{eq:SecondInequality}), we obtain with $C_{S}:=2c_{S}(R_{S}+J_{S}+1)+d_{S}$ the desired inequality. \end{proof}

\vspace{3mm}


\section{Word-length spectral triples of $(\mathbb{Z}/2\mathbb{Z})\wr\mathbb{F}_{d}$}

\vspace{3mm}

The purpose of this section is to prove Theorem \ref{MainTheorem}. We keep the assumptions and notation of the previous section; in particular, $G:=(\mathbb{Z}/2\mathbb{Z})\wr\mathbb{F}_{d}$ will denote the Lamplighter group of $\mathbb{F}_{d}$. Let $\lambda\colon G\rightarrow\mathcal{U}(\ell^{2}(G))$ be the associated left-regular representation. As in the introduction, we consider the reduced group C$^{\ast}$-algebra $C_{\text{r}}^{\ast}(G)\subseteq\mathcal{B}(\ell^{2}(G))$ and its canonical norm-dense $\ast$-subalgebra, the group algebra $\mathbb{C}[G]$. Given a finite symmetric generating set $S$ of $G$, we define for $\ell:=\ell_{S}$ a densely defined, self-adjoint operator $D_{\ell}$ on $\ell^{2}(G)$ via $D_{\ell}\delta_{g}:=\ell(g)\delta_{g}$ for all $g\in G$, where $(\delta_{g})_{g\in G}$ denotes the standard orthonormal basis of the Hilbert space $\ell^{2}(G)$. The resulting triple $(\mathbb{C}[G],\ell^{2}(G),D_{\ell})$ then forms a non-degenerate spectral triple.

\vspace{3mm}


\subsection{Elements supported on lamp spheres}

Recall that $\mathcal{S}_{r}:=\left\{ g\in\mathbb{F}_{d}\mid d_\mathcal{T}(e,g) =r\right\} $ with $\#\mathcal{S}_{r}=2d(2d-1)^{r-1}$ for $r\geq1$. Similar to before, for $E\in\mathcal{F}(\mathbb{F}_{d})$ and $g,x\in\mathbb{F}_{d}$, we write $\mathcal{H}_{x}(E,g)$ for the smallest subtree of $\mathcal{T}$ containing the set $(E\setminus\{x\})\cup\{e,g\}$. Furthermore, we denote the set of elements in $\mathbb{F}_{d}$ starting with $x\in\mathbb{F}_{d}$ by
\[
\mathcal{T}_{x}:=\left\{ g\in\mathbb{F}_{d}\mid d_{\mathcal{T}}(x,g)=d_{\mathcal{T}}(g,e)-d_{\mathcal{T}}(x,e)\right\} ,
\]
and let $v_{k}(x)\in\mathbb{F}_{d}$ be the unique prefix of $x$ of length $0\leq k\leq d_{\mathcal{T}}(e,x)$.

\begin{lemma} \label{Identity} For every $r\geq1$, $E\in\mathcal{F}(\mathbb{F}_{d})$, $g\in\mathbb{F}_{d}$, and $x\in\mathcal{S}_{r}$, the following equality holds: 
\[
d_{\mathcal{T}}(x,\mathcal{H}_{x}(E,g))=\sum_{k=1}^{r}\delta\left(g\notin \mathcal{T}_{v_{k}(x)}\right)\delta\left((E\setminus\{x\})\cap \mathcal{T}_{v_{k}(x)}=\emptyset\right).
\]
\end{lemma}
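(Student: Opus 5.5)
We will compute the distance from $x$ to the subtree $\mathcal{H}_{x}(E,g)$ directly along the geodesic $[e,x]_{\mathcal{T}}=\{v_{0}(x),v_{1}(x),\ldots,v_{r}(x)\}$, where $v_{0}(x)=e$ and $v_{r}(x)=x$. Set $F:=(E\setminus\{x\})\cup\{g\}$. The first step is to identify $\mathcal{H}_{x}(E,g)$ as the union of the geodesics $[e,h]_{\mathcal{T}}$ over $h\in F\cup\{e\}$. This union is connected because each geodesic contains $e$, and it is contained in every subtree containing $F\cup\{e\}$ because subtrees are geodesically convex. Since $e\in\mathcal{H}_{x}(E,g)$, the set $K:=\{k\in\{0,\ldots,r\}\mid v_{k}(x)\in\mathcal{H}_{x}(E,g)\}$ contains $0$.

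The second step is to show that $K$ is an initial segment $\{0,\ldots,k_{0}\}$ and that $d_{\mathcal{T}}(x,\mathcal{H}_{x}(E,g))=r-k_{0}$. That $K$ is an initial segment follows from convexity, since $e\in\mathcal{H}_{x}(E,g)$ and $v_{k}(x)\in[e,v_{k'}(x)]_{\mathcal{T}}$ for $k\le k'$. For the distance, let $y\in\mathcal{H}_{x}(E,g)$ be arbitrary. The geodesic from $x$ to $y$ runs back along $[e,x]_{\mathcal{T}}$ to the branching point $v_{j}(x)$ and then leaves it. That branching point lies on $[e,y]_{\mathcal{T}}\subseteq\mathcal{H}_{x}(E,g)$, so $j\le k_{0}$, and hence $d_{\mathcal{T}}(x,y)\ge r-j\ge r-k_{0}$. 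The nearest point is therefore $v_{k_{0}}(x)$.

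The third step characterizes membership: for $1\le k\le r$, we claim $v_{k}(x)\in\mathcal{H}_{x}(E,g)$ if and only if $F\cap\mathcal{T}_{v_{k}(x)}\neq\emptyset$. Here $\mathcal{T}_{v}$ is exactly the set of $h$ whose geodesic $[e,h]_{\mathcal{T}}$ passes through $v$. By the first step, $v_{k}(x)$ lies in the hull if and only if it lies on some $[e,h]_{\mathcal{T}}$ with $h\in F\cup\{e\}$. The case $h=e$ contributes nothing because $k\ge1$. Splitting $F$ into $g$ and $E\setminus\{x\}$, we obtain: $v_{k}(x)\notin\mathcal{H}_{x}(E,g)$ if and only if $g\notin\mathcal{T}_{v_{k}(x)}$ and $(E\setminus\{x\})\cap\mathcal{T}_{v_{k}(x)}=\emptyset$. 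In other words, the summand in the statement is exactly $\delta(k\notin K)$.

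Finally, summing over $k=1,\ldots,r$ gives $\#(\{1,\ldots,r\}\setminus K)=r-k_{0}$, which equals the distance by the second step. The only step needing care is the second, namely justifying that the closest point of the hull to $x$ lies on $[e,x]_{\mathcal{T}}$. I would handle it by the branching-point argument above, which uses only that $e$ lies in the hull and that geodesics in a tree are unique. Everything else is bookkeeping with prefixes.
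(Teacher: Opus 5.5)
Your proof is correct and follows essentially the same route as the paper. You write $\mathcal{H}_{x}(E,g)$ as a union of geodesics from $e$, show that it meets $[e,x]_{\mathcal{T}}$ in an initial segment, and identify the $k$-th summand with the $k$-th step of $[e,x]_{\mathcal{T}}$ lying outside the hull. The differences are minor: you test the vertices $v_{k}(x)$ instead of the edges joining $v_{k-1}(x)$ and $v_{k}(x)$, which is equivalent here since the hull is a subtree containing $e$ and $k\geq1$. Your branching-point argument also supplies the justification, which the paper only asserts, that $v_{k_{0}}(x)$ is the point of the hull nearest to $x$.
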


\begin{proof} First note that the set $\mathcal{H}_{x}(E,g)$ can be expressed as
\begin{equation}
\mathcal{H}_{x}(E,g)=[e,g]_{\mathcal{T}}\cup\bigcup_{y\in E\setminus\{x\}}[e,y]_{\mathcal{T}}.\label{eq:Decomposition}
\end{equation}

For a fixed $1\leq k\leq r$, let $\varepsilon_{k}(x)\in E\mathcal{T}$ be the edge joining $v_{k-1}(x)$ and $v_{k}(x)$. For every $y\in\mathbb{F}_{d}$, the geodesic $[e,y]_{\mathcal{T}}$ contains the edge $\varepsilon_{k}(x)$ if and only if $y\in \mathcal{T}_{v_{k}(x)}$. Using (\ref{eq:Decomposition}), it follows that $\varepsilon_{k}(x)$ belongs to $\mathcal{H}_{x}(E,g)$ if and only if at least one point of $\{g\}\cup(E\setminus\{x\})$ belongs to $\mathcal{T}_{v_{k}(x)}$. Equivalently, $\varepsilon_{k}(x)\notin\mathcal{H}_{x}(E,g)$ if and only if $g\notin \mathcal{T}_{v_{k}(x)}$ and $(E\setminus\{x\})\cap \mathcal{T}_{v_{k}(x)}=\emptyset$. We thereby obtain that
\[
\sum_{k=1}^{r}\delta\left(g\notin \mathcal{T}_{v_{k}(x)}\right)\delta\left((E\setminus\{x\})\cap \mathcal{T}_{v_{k}(x)}=\emptyset\right)=\sum_{k=1}^{r}\delta\left(\varepsilon_{k}(x)\notin\mathcal{H}_{x}(E,g)\right).
\]

Since $\mathcal{H}_{x}(E,g)$ is a connected subtree of $\mathcal{T}$ containing $e$, its intersection with the geodesic $[e,x]_{\mathcal{T}}$ is an initial segment of $[e,x]_{\mathcal{T}}$; thus, there exists an integer $0 \leq j\leq r$ with $\mathcal{H}_{x}(E,g)\cap[e,x]_{\mathcal{T}}=[e,v_{j}(x)]_{\mathcal{T}}$. The vertex $v_{j}(x)$ is the closest point of $\mathcal{H}_{x}(E,g)$ to $x$, so that 
\[
d_{\mathcal{T}}(x,\mathcal{H}_{x}(E,g))=d_{\mathcal{T}}(x,v_{j}(x))=r-j.
\]
Since $r-j$ is exactly the number of edges $\varepsilon_{k}$ with $1\leq k\leq r$ that do not belong to $\mathcal{H}_{x}(E,g)$, it follows that $\sum_{k=1}^{r}\delta\left(\varepsilon_{k}(x)\notin\mathcal{H}_{x}(E,g)\right)=d_{\mathcal{T}}(x,\mathcal{H}_{x}(E,g))$, hence finishing the proof. \end{proof}

\begin{lemma} \label{QDefinition} For $r\geq1$, $1\leq k\leq r$, and $x\in\mathcal{S}_{k}$, the map
\[
\ell^{2}(G)\ni\delta_{(\chi_{E},g)}\mapsto\delta\left(g\notin\mathcal{T}_{x}\right)\sum_{y\in\mathcal{S}_{r}\cap\mathcal{T}_{x}}\delta\left((E\setminus\{y\})\cap\mathcal{T}_{x}=\emptyset\right)\delta_{(\chi_{E\bigtriangleup\{y\}},g)}
\]
extends to a bounded linear operator $Q_{x,r}\in\mathcal{B}(\ell^{2}(G))$ with $\Vert Q_{x,r}\Vert\leq(2d-1)^{\frac{r-k}{2}}$. \end{lemma}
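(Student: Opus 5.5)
Write $N:=\#(\mathcal{S}_{r}\cap\mathcal{T}_{x})=(2d-1)^{r-k}$ (for $x\in\mathcal{S}_k$, the subtree $\mathcal{T}_x$ branches $(2d-1)$-fold at each level). The plan is to apply the Schur test to the matrix of $Q_{x,r}$ in the standard basis. Since every matrix entry is $0$ or $1$, it suffices to bound two quantities. The first is the maximal number of nonzero entries in a column, i.e.\ the number of images of a fixed basis vector. The second is the maximal number of nonzero entries in a row, i.e.\ the number of basis vectors $\delta_{(\chi_{E},g)}$ mapped onto a given $\delta_{(\chi_{F},g')}$. If the column bound is $N$ and the row bound is $1$, the Schur test gives $\Vert Q_{x,r}\Vert\leq\sqrt{N\cdot1}=(2d-1)^{\frac{r-k}{2}}$, which is exactly the claimed bound. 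Alternatively, once the row bound is $1$ one can estimate $\Vert Q_{x,r}\xi\Vert^{2}$ directly.

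The column bound is immediate. For fixed $(\chi_{E},g)$, the image is a sum of at most $\#(\mathcal{S}_{r}\cap\mathcal{T}_{x})=N$ basis vectors, one for each $y\in\mathcal{S}_r\cap\mathcal{T}_x$. These vectors are pairwise distinct because $E\bigtriangleup\{y\}$ determines $y$.

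The row bound is the main point. Fix a target $(\chi_{F},g)$; the base point is unchanged, so any preimage has the form $(\chi_{E},g)$ with $E=F\bigtriangleup\{y\}$ for some $y\in\mathcal{S}_{r}\cap\mathcal{T}_{x}$. The indicator condition requires $(E\setminus\{y\})\cap\mathcal{T}_{x}=\emptyset$. Since $E\setminus\{y\}=F\setminus\{y\}$, this forces $F\cap\mathcal{T}_{x}\subseteq\{y\}$. Toggling $y$ means $F$ contains $y$ exactly when $E$ does not, so the two cases split as follows.
\begin{itemize}
\item If $y\notin E$, then $y\in F$, hence $F\cap\mathcal{T}_{x}=\{y\}$. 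Thus $y$ is uniquely determined by $F$.
\item If $y\in E$, then $F\cap\mathcal{T}_{x}=\emptyset$, and a priori every $y\in\mathcal{S}_{r}\cap\mathcal{T}_{x}$ could occur.
\end{itemize}
So the row count can be as large as $N$ in the second case, and the naive Schur bound then gives only $N$ rather than $\sqrt N$.

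I expect this to be the obstacle. To handle it, I would split $Q_{x,r}=A+B$ according to whether $y\notin E$ or $y\in E$. The piece $A$ has column count at most $N$ and row count at most $1$, so $\Vert A\Vert\le\sqrt N$. The piece $B$ maps configurations with $E\cap\mathcal{T}_x=\{y\}$ to $F\cap\mathcal{T}_x=\emptyset$. For fixed $E$ there is at most one such $y$, so $B$ has column count $1$ and row count at most $N$, giving $\Vert B\Vert\le\sqrt N$.

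However, $A+B$ only yields $2\sqrt N$, so I need more structure. The key observation is that $A$ and $B$ act between orthogonal subspaces: $A$ maps $\{E\cap\mathcal{T}_x=\emptyset\}\to\{\#(E\cap\mathcal{T}_x)=1\}$, and $B=A^{*}$ maps back. Hence $Q_{x,r}$ is self-adjoint, and in block form it is $\begin{pmatrix}0&A^{*}\\A&0\end{pmatrix}$ with respect to the decomposition $\mathcal{H}_0\oplus\mathcal{H}_1\oplus\mathcal{H}_{\mathrm{rest}}$, where $\mathcal{H}_j$ is spanned by configurations with $\#(E\cap\mathcal{T}_x)=j$ and $g\notin\mathcal{T}_x$. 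The norm of such an off-diagonal block operator equals $\Vert A\Vert$. Finally, Schur's test on $A$ (columns $\le N$, rows $\le1$) gives $\Vert A\Vert\le\sqrt N$, which yields $\Vert Q_{x,r}\Vert\le(2d-1)^{\frac{r-k}{2}}$.
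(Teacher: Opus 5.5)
Your proof is correct and essentially the paper's: the paper likewise splits off the span of configurations with no lamp in $\mathcal{T}_x$ and the span of those with exactly one lamp there, notes that $Q_{x,r}$ swaps these two subspaces and vanishes elsewhere (so $\Vert Q_{x,r}\Vert$ is the larger of the two off-diagonal block norms), and bounds each block by $\sqrt{N}$. The only difference is cosmetic. The paper computes $\Vert Q_{x,r}\xi\Vert^{2}=N\Vert\xi\Vert^{2}$ on the zero-lamp part and applies Cauchy--Schwarz on the one-lamp part, whereas you observe $B=A^{*}$ and apply Schur's test to $A$.
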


\begin{proof} Define the orthogonal subspaces 
\begin{eqnarray*}
\mathcal{H}_{1}:&=&\overline{\text{Span}}\left(\bigcup_{E\in\mathcal{F}(\mathbb{F}_{d}):E\cap\mathcal{T}_{x}=\emptyset}\left\{ \delta_{(\chi_{E},g)}\mid g\in\mathbb{F}_{d}\setminus\mathcal{T}_{x}\right\} \right) \\
\mathcal{H}_{2}:&=&\overline{\text{Span}}\left(\bigcup_{E\in\mathcal{F}(\mathbb{F}_{d}):E\cap\mathcal{T}_{x}=\emptyset}\bigcup_{y\in\mathcal{S}_{r}\cap\mathcal{T}_{x}}\left\{ \delta_{(\chi_{E\cup\{y\}},g)}\mid g\in\mathbb{F}_{d}\setminus\mathcal{T}_{x}\right\} \right)
\end{eqnarray*}
of $\ell^{2}(G)$, and let $P_{1}$ and $P_{2}$ be the corresponding orthogonal projections onto $\mathcal{H}_{1}$ and $\mathcal{H}_{2}$.\\

We distinguish four cases:\\

\begin{itemize}
\item \emph{Case 1}: For $g\in\mathcal{T}_{x}$ and $E\in\mathcal{F}(\mathbb{F}_{d})$, one has that $Q_{x,r}\delta_{(\chi_{E},g)}=0$. 
\item \emph{Case 2}: For $g\in\mathbb{F}_{d}\setminus\mathcal{T}_{x}$ and $E\in\mathcal{F}(\mathbb{F}_{d})$ with $E\cap\mathcal{T}_{x}=\emptyset$, the equality
\[
Q_{x,r}\delta_{(\chi_{E},g)}=\sum_{y\in\mathcal{S}_{r}\cap\mathcal{T}_{x}}\delta_{(\chi_{E\cup\{y\}},g)}
\]
holds.
\item \emph{Case 3}: For $g\in\mathbb{F}_{d}\setminus\mathcal{T}_{x}$ and $E\in\mathcal{F}(\mathbb{F}_{d})$ with $E\cap\mathcal{T}_{x}=\{y\}$ for some $y$,
\[
Q_{x,r}\delta_{(\chi_{E},g)}=\begin{cases}
\delta_{(\chi_{E\setminus\{y\}},g)}, & \text{if }y\in\mathcal{S}_{r},\\
0, & \text{if }y\notin\mathcal{S}_{r}.
\end{cases}
\]
\item \emph{Case 4}: For $g\in\mathbb{F}_{d}\setminus\mathcal{T}_{x}$ and $E\in\mathcal{F}(\mathbb{F}_{d})$ with $\#(E\cap\mathcal{T}_{x})\geq2$, $Q_{x,r}\delta_{(\chi_{E},g)}=0$.
\end{itemize}

The calculations above imply that $Q_{x,r}$ vanishes on $(\mathcal{H}_{1}\oplus\mathcal{H}_{2})^{\perp}$ and that $Q_{x,r}\mathcal{H}_{1}\subseteq\mathcal{H}_{2}$ and $Q_{x,r}\mathcal{H}_{2}\subseteq\mathcal{H}_{1}$. It follows that $Q_{x,r}=P_{1}Q_{x,r}P_{2}+P_{2}Q_{x,r}P_{1}$ and therefore
\[
\Vert Q_{x,r}\Vert=\max\{\Vert P_{1}Q_{x,r}P_{2}\Vert,\Vert P_{2}Q_{x,r}P_{1}\Vert\}.
\]
For every finite sum of the form $\xi:=\sum_{E\in\mathcal{F}(\mathbb{F}_{d}):E\cap\mathcal{T}_{x}=\emptyset}\sum_{g\in\mathbb{F}_{d}\setminus\mathcal{T}_{x}}\xi_{E,g}\delta_{(\chi_{E},g)}\in\mathcal{H}_{1}$ with complex-valued coefficients,
\begin{eqnarray*}
\left\Vert Q_{x,r}\xi\right\Vert ^{2} &=& \left\Vert \sum_{E\in\mathcal{F}(\mathbb{F}_{d}):E\cap\mathcal{T}_{x}=\emptyset}\sum_{g\in\mathbb{F}_{d}\setminus\mathcal{T}_{x}}\sum_{y\in\mathcal{S}_{r}\cap\mathcal{T}_{x}}\xi_{E,g}\delta_{(\chi_{E\cup\{y\}},g)}\right\Vert ^{2} \\
&=& \#(\mathcal{S}_{r}\cap\mathcal{T}_{x})\sum_{E\in\mathcal{F}(\mathbb{F}_{d}):E\cap\mathcal{T}_{x}=\emptyset}\sum_{g\in\mathbb{F}_{d}\setminus\mathcal{T}_{x}}|\xi_{E,g}|^{2} \\
&=& \#(\mathcal{S}_{r}\cap\mathcal{T}_{x})\Vert\xi\Vert^{2},
\end{eqnarray*}
while for every finite sum $\eta:=\sum_{E\in\mathcal{F}(\mathbb{F}_{d}):E\cap\mathcal{T}_{x}=\emptyset}\sum_{y\in\mathcal{S}_{r}\cap\mathcal{T}_{x}}\sum_{g\in\mathbb{F}_{d}\setminus\mathcal{T}_{x}}\eta_{E,g,y}\delta_{(\chi_{E\cup\{y\}},g)}\in\mathcal{H}_{2}$ with complex-valued coefficients,
\begin{eqnarray*}
\left\Vert Q_{x,r}\eta\right\Vert ^{2} &=& \left\Vert \sum_{E\in\mathcal{F}(\mathbb{F}_{d}):E\cap\mathcal{T}_{x}=\emptyset}\sum_{y\in\mathcal{S}_{r}\cap\mathcal{T}_{x}}\sum_{g\in\mathbb{F}_{d}\setminus\mathcal{T}_{x}}\eta_{E,g,y}\delta_{(\chi_{E},g)}\right\Vert ^{2} \\
&=& \sum_{E\in\mathcal{F}(\mathbb{F}_{d}):E\cap\mathcal{T}_{x}=\emptyset}\sum_{g\in\mathbb{F}_{d}\setminus\mathcal{T}_{x}}\left|\sum_{y\in\mathcal{S}_{r}\cap\mathcal{T}_{x}}\eta_{E,g,y}\right|^{2} \\
&\leq& \#(\mathcal{S}_{r}\cap\mathcal{T}_{x})\sum_{E\in\mathcal{F}(\mathbb{F}_{d}):E\cap\mathcal{T}_{x}=\emptyset}\sum_{g\in\mathbb{F}_{d}\setminus\mathcal{T}_{x}}\sum_{y\in\mathcal{S}_{r}\cap\mathcal{T}_{x}}\left|\eta_{E,g,y}\right|^{2} \\
&=& \#(\mathcal{S}_{r}\cap\mathcal{T}_{x})\Vert\eta\Vert^{2}.
\end{eqnarray*}
Thereby, 
\[
\max\{\Vert P_{1}Q_{x,r}P_{2}\Vert,\Vert P_{2}Q_{x,r}P_{1}\Vert\} \leq \sqrt{\#(\mathcal{S}_{r}\cap\mathcal{T}_{x})}=(2d-1)^{\frac{r-k}{2}}.
\]
 This implies the claim. \end{proof}

\begin{proposition} \label{BrDefinition} For $r\geq1$, the map
\begin{equation}
\ell^{2}(G)\ni\delta_{(\chi_{E},g)}\mapsto\frac{1}{\#\mathcal{S}_{r}}\sum_{x\in\mathcal{S}_{r}}\left(1+d_{\mathcal{T}}(x,\mathcal{H}_{x}(E,g))\right)\delta_{(\chi_{E}\bigtriangleup\{x\},g)}\label{eq:MapDefinition}
\end{equation}
extends to a bounded linear operator $B_{r}\in\mathcal{B}(\ell^{2}(G))$ with 
\[
\Vert B_{r}\Vert\leq1+\frac{1}{1-(2d-1)^{-\frac{1}{2}}}.
\]
\end{proposition}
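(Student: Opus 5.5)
Split the weight $1+d_{\mathcal{T}}(x,\mathcal{H}_{x}(E,g))$ into its two summands and treat them separately.

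\emph{The constant part.} The term $1$ gives the operator $\frac{1}{\#\mathcal{S}_{r}}\sum_{x\in\mathcal{S}_{r}}\lambda(\chi_{\{x\}},e)$. Indeed, left multiplication by $(\chi_{\{x\}},e)$ sends $\delta_{(\chi_E,g)}$ to $\delta_{(\chi_{E\bigtriangleup\{x\}},g)}$. This is an average of unitaries, so it has norm at most $1$, which accounts for the first summand of the bound.

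\emph{The distance part.} By Lemma \ref{Identity}, the remaining operator $\delta_{(\chi_E,g)}\mapsto \frac{1}{\#\mathcal{S}_r}\sum_{x\in\mathcal{S}_r} d_{\mathcal{T}}(x,\mathcal{H}_x(E,g))\,\delta_{(\chi_{E\bigtriangleup\{x\}},g)}$ can be rewritten as
\[
\frac{1}{\#\mathcal{S}_{r}}\sum_{x\in\mathcal{S}_{r}}\sum_{k=1}^{r}\delta\left(g\notin\mathcal{T}_{v_{k}(x)}\right)\delta\left((E\setminus\{x\})\cap\mathcal{T}_{v_{k}(x)}=\emptyset\right)\delta_{(\chi_{E\bigtriangleup\{x\}},g)}.
\]
Next I reorganize the double sum by the prefix $u:=v_k(x)\in\mathcal{S}_k$. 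For fixed $k$, the points $x\in\mathcal{S}_r$ with $v_k(x)=u$ are exactly those in $\mathcal{S}_r\cap\mathcal{T}_u$. Since $E\bigtriangleup\{x\}$ is the same as $E\bigtriangleup\{y\}$ with $y=x$, the inner sum over $x$ with fixed $u$ is precisely $Q_{u,r}\delta_{(\chi_E,g)}$ from Lemma \ref{QDefinition}. Hence this part equals
\[
\frac{1}{\#\mathcal{S}_r}\sum_{k=1}^r\sum_{u\in\mathcal{S}_k}Q_{u,r}.
\]
I will verify this identity on basis vectors; it is pure bookkeeping.

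\emph{Norm estimate.} The triangle inequality with $\Vert Q_{u,r}\Vert\le(2d-1)^{(r-k)/2}$ is too crude, because there are $\#\mathcal{S}_k$ terms at level $k$, and it would give $\sum_k \#\mathcal{S}_k(2d-1)^{(r-k)/2}/\#\mathcal{S}_r\sim\sum_k(2d-1)^{(k-r)/2}$. That is still summable: with $j=r-k$ the sum is bounded by $\sum_{j\ge0}(2d-1)^{-j/2}=1/(1-(2d-1)^{-1/2})$. So the plain triangle inequality already works. Explicitly, $\#\mathcal{S}_k/\#\mathcal{S}_r=(2d-1)^{k-r}$, so level $k$ contributes at most $(2d-1)^{k-r}(2d-1)^{(r-k)/2}=(2d-1)^{-(r-k)/2}$. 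Summing over $k=1,\dots,r$ gives a bound of at most $\frac{1}{1-(2d-1)^{-1/2}}$.

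\emph{Conclusion.} Adding the two parts gives $\Vert B_r\Vert\le 1+\frac{1}{1-(2d-1)^{-1/2}}$. Boundedness on finitely supported vectors extends $B_r$ to all of $\ell^2(G)$.

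The only step needing care is the regrouping identity, specifically that the indicator conditions in Lemma \ref{Identity} match those in the definition of $Q_{u,r}$ exactly. They do: the conditions $g\notin\mathcal{T}_u$ and $(E\setminus\{x\})\cap\mathcal{T}_u=\emptyset$ appear verbatim in both, with $y=x$.
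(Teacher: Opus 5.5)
Your proposal is correct and follows the paper's proof exactly: you split off the averaged unitaries $\frac{1}{\#\mathcal{S}_r}\sum_{x\in\mathcal{S}_r}\lambda_{(\chi_{\{x\}},e)}$, use Lemma \ref{Identity} to regroup the distance part by prefixes into $\frac{1}{\#\mathcal{S}_r}\sum_{k=1}^r\sum_{u\in\mathcal{S}_k}Q_{u,r}$, and then apply the triangle inequality with Lemma \ref{QDefinition} and $\#\mathcal{S}_k/\#\mathcal{S}_r=(2d-1)^{k-r}$ to obtain the geometric-series bound. Your passing remark that the triangle inequality is ``too crude'' is contradicted by your own computation, so drop it.
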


\begin{proof} By the identity in Lemma \ref{Identity} and the definition of $Q_{x,r}$, one has
\[
B_{r}=\frac{1}{\#\mathcal{S}_{r}}\sum_{x\in\mathcal{S}_{r}}\lambda_{(\chi_{\{x\}},e)}+\frac{1}{\#\mathcal{S}_{r}}\sum_{k=1}^{r}\sum_{x\in\mathcal{S}_{k}}Q_{x,r},
\]
so that the map in (\ref{eq:MapDefinition}) indeed extends to a bounded linear operator. Furthermore, by Lemma \ref{QDefinition},
\begin{eqnarray*}
\Vert B_{r}\Vert &\leq& 1+\frac{1}{\#\mathcal{S}_{r}}\sum_{k=1}^{r}\sum_{x\in\mathcal{S}_{k}}\Vert Q_{x,r}\Vert\leq1+\frac{1}{\#\mathcal{S}_{r}}\sum_{k=1}^{r}(\#\mathcal{S}_{k})(2d-1)^{\frac{r-k}{2}} \\
&=& 1+\sum_{k=1}^{r}(2d-1)^{\frac{k-r}{2}} \leq 1+\frac{1}{1-(2d-1)^{-\frac{1}{2}}}.
\end{eqnarray*}
\end{proof}

\begin{theorem} \label{AveragingOperators} For every finite symmetric generating set $S$ of $G$ and every $r\geq1$, the element $z_{r}:=(\#\mathcal{S}_{r})^{-1}\sum_{g\in\mathcal{S}_{r}}\lambda_{(\chi_{\{g\}},e)}\in\mathbb{C}[G]\subseteq C_{\text{r}}^{\ast}(G)$ satisfies for $\ell:=\ell_{S}$ the inequality 
\[
\left\Vert [D_{\ell},z_{r}]\right\Vert \leq C_{S}\left(1+\frac{1}{1-(2d-1)^{-\frac{1}{2}}}\right),
\]
where $C_{S}>0$ is the constant from Proposition \ref{WordLengthEstimate}. \end{theorem}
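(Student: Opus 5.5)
We compute the matrix coefficients of $[D_{\ell},z_{r}]$ in the standard basis and then dominate them entrywise by those of $C_{S}B_{r}$, where $B_{r}$ is the operator of Proposition \ref{BrDefinition}. Since $\lambda_{(\chi_{\{x\}},e)}\delta_{(\chi_{E},g)}=\delta_{(\chi_{E\bigtriangleup\{x\}},g)}$, we get
\[
[D_{\ell},z_{r}]\delta_{(\chi_{E},g)}=\frac{1}{\#\mathcal{S}_{r}}\sum_{x\in\mathcal{S}_{r}}\left(\ell(\chi_{E\bigtriangleup\{x\}},g)-\ell(\chi_{E},g)\right)\delta_{(\chi_{E\bigtriangleup\{x\}},g)}.
\]
For fixed $(E,g)$ the vectors $\delta_{(\chi_{E\bigtriangleup\{x\}},g)}$, $x\in\mathcal{S}_{r}$, are pairwise distinct. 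Hence the matrix of $[D_{\ell},z_{r}]$ has entries
\[
\frac{1}{\#\mathcal{S}_{r}}\left(\ell(\chi_{E\bigtriangleup\{x\}},g)-\ell(\chi_{E},g)\right)
\]
at position $\bigl((\chi_{E\bigtriangleup\{x\}},g),(\chi_{E},g)\bigr)$ and zero elsewhere. The matrix of $B_{r}$ has the same support, with nonnegative entries $\frac{1}{\#\mathcal{S}_{r}}(1+d_{\mathcal{T}}(x,\mathcal{H}_{x}(E,g)))$.

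The key pointwise estimate is
\[
\left|\ell(\chi_{E\bigtriangleup\{x\}},g)-\ell(\chi_{E},g)\right|\leq C_{S}\left(1+d_{\mathcal{T}}(x,\mathcal{H}_{x}(E,g))\right).
\]
We prove it by applying Proposition \ref{WordLengthEstimate} to the set $E':=E\setminus\{x\}$, the point $x\notin E'$, and the group element $g$. Both configurations $E$ and $E\bigtriangleup\{x\}$ lie in $\{E',E'\cup\{x\}\}$, so the difference of their lengths equals $\pm\bigl(\ell(\chi_{E'\cup\{x\}},g)-\ell(\chi_{E'},g)\bigr)$. Moreover $\mathcal{H}(E',g)=\mathcal{H}_{x}(E,g)$ by definition, so the proposition gives exactly the displayed bound. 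The symmetry between toggling $x$ on and off is what makes $\mathcal{H}_{x}$ (with $x$ removed) the right object here. I expect this identification to be the only delicate point.

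It remains to pass from entrywise domination to operator norms. For any matrix $T$ with $|T_{ij}|\leq M_{ij}$ and $M$ having nonnegative entries, we have $\Vert T\Vert\leq\Vert M\Vert$: indeed $|\langle T\xi,\eta\rangle|\leq\langle M|\xi|,|\eta|\rangle$ for finitely supported $\xi,\eta$. Applying this with $M=C_{S}B_{r}$ gives
\[
\Vert[D_{\ell},z_{r}]\Vert\leq C_{S}\Vert B_{r}\Vert.
\]
Proposition \ref{BrDefinition} then yields the stated constant. Finally, the commutator is a priori only defined on finitely supported vectors. The bound on this dense subspace shows that it extends to a bounded operator with the same norm.
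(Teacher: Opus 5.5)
Your proposal is correct and follows essentially the same route as the paper: the paper bounds $|([D_{\ell},z_{r}]\xi)(\chi_{E},g)|$ pointwise by $C_{S}(B_{r}|\xi|)(\chi_{E},g)$ using Proposition \ref{WordLengthEstimate} applied to $E\setminus\{x\}$, which is your entrywise-domination argument. The only difference is that the paper indexes row-wise and so needs the identity $\mathcal{H}_{x}(E\bigtriangleup\{x\},g)=\mathcal{H}_{x}(E,g)$, whereas your column-wise indexing makes this automatic.
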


\begin{proof} Given a finitely supported function $\xi\in\ell^{2}(G)$, write $|\xi|:=\sum_{E\in\mathcal{F}(\mathbb{F}_{d})}\sum_{g\in\mathbb{F}_{d}}|\xi(E,g)|\delta_{(\chi_{E},g)}$. We have that 
\[
\left([D_{\ell},z_{r}]\xi\right)(\chi_{E},g)=\frac{1}{\#\mathcal{S}_{r}}\sum_{x\in\mathcal{S}_{r}}\left(\ell_{S}(\chi_{E},g)-\ell_{S}(\chi_{E\bigtriangleup\{x\}},g)\right)\xi(\chi_{E\bigtriangleup\{x\}},g)
\]
for every $E\in\mathcal{F}(\mathbb{F}_{d})$ and $g\in\mathbb{F}_{d}$, so that by Proposition \ref{WordLengthEstimate}, the definition of $B_{r}$ in Proposition \ref{BrDefinition}, and the equality $\mathcal{H}_{x}(E\bigtriangleup\{x\},g)=\mathcal{H}_{x}(E,g)$ for $x\in\mathcal{S}_{r}$, 
\[
\left|\left([D_{\ell},z_{r}]\xi\right)(\chi_{E},g)\right|\leq\frac{C_{S}}{\#\mathcal{S}_{r}}\sum_{x\in\mathcal{S}_{r}}\left(1+d_{\mathcal{T}}(x,\mathcal{H}_{x}(E,g))\right)\left|\xi(\chi_{E\bigtriangleup\{x\}},g)\right|=C_{S}\left(B_{r}|\xi|\right)(\chi_{E},g).
\]
With $\Vert B_{r}\Vert\leq1+(1-(2d-1)^{-\frac{1}{2}})^{-1}$, one obtains that 
\[
\left\Vert [D_{\ell},z_{r}]\xi\right\Vert  \leq C_{S}\left\Vert B_{r}|\xi|\right\Vert \leq C_{S}\left(1+\frac{1}{1-(2d-1)^{-\frac{1}{2}}}\right)\left\Vert \xi\right\Vert ,
\]
and hence 
\[
\left\Vert [D_{\ell},z_{r}]\right\Vert \leq C_{S}\left(1+\frac{1}{1-(2d-1)^{-\frac{1}{2}}}\right).
\]
\end{proof}

\vspace{3mm}


\subsection{Proof of the main result}

We have now assembled all the ingredients to prove Theorem \ref{MainTheorem}.

\begin{proof}[{Proof of Theorem \ref{MainTheorem}}] Let $G:=(\mathbb{Z}/2\mathbb{Z})\wr\mathbb{F}_{d}$ be the Lamplighter group of $\mathbb{F}_{d}$ with $d \geq 2$, and let $S$ be a finite symmetric generating set. By the characterization in Proposition \ref{Characterization}, it suffices to show that the set 
\[
\mathcal{L}_{1}:=\{a\in\mathbb{C}[G]\mid\tau(a)=0\text{ and } \Vert[D_{\ell},a]\Vert\leq1\}
\]
is not totally bounded, where $\tau$ denotes the canonical tracial state on $C_{\text{r}}^{\ast}(G)$. For this, consider the family of elements
\[
z_{r}:=\frac{1}{\#\mathcal{S}_{r}}\sum_{g\in\mathcal{S}_{r}}\lambda_{(\chi_{\{g\}},e)}\in\mathbb{C}[G]\subseteq C_{\text{r}}^{\ast}(G)
\]
from Theorem \ref{AveragingOperators} and set $\widetilde{z}_{r}:=C^{-1}z_{r}$, where $C:=C_{S}(1+(1-(2d-1)^{-\frac{1}{2}})^{-1})$. Then, $\widetilde{z}_{r}\in\mathcal{L}_{1}$ for every $r\geq1$.

Fix $r\neq r^{\prime}$ with $r,r^{\prime}\geq1$. The coefficient group $B:=\bigoplus_{\mathbb{F}_{d}}\mathbb{Z}/2\mathbb{Z}\leq G$ is Abelian, and the map $B\rightarrow\mathbb{T}$, $\mathbf{x}\mapsto\prod_{g\in\mathcal{S}_{r}}(-1)^{x_{g}}$ defines a group homomorphism on $B$. This group homomorphism induces a multiplicative state $\chi$ on the Abelian C$^{\ast}$-algebra $C_{\text{r}}^{\ast}(B)\subseteq C_{\text{r}}^{\ast}(G)$, so that in particular 
\[
\Vert\widetilde{z}_{r}-\widetilde{z}_{r^{\prime}}\Vert=C^{-1}\Vert z_{r}-z_{r^{\prime}}\Vert\geq C^{-1}|\chi(z_{r}-z_{r^{\prime}})|=2 C^{-1}.
\]
Since $\{\widetilde{z}_{r}\mid r\geq1\}\subseteq\mathcal{L}_{1}$ is thereby a uniformly separated subset, $\mathcal{L}_{1}$ cannot be totally bounded. This completes the proof. \end{proof}

We finish this section with the following natural question.

\begin{question}
Let $\Gamma$ be a non-elementary word-hyperbolic group and let $G:=(\mathbb{Z}/2\mathbb{Z})\wr\Gamma$. Is the spectral triple $(\mathbb{C}[G],\ell^{2}(G),D_{\ell})$ with $\ell:=\ell_S$ for every finite symmetric generating set $S$ of $G$ non-metric?
\end{question}

\vspace{3mm}



\begin{thebibliography}{BBI}

\bibitem{AntonescuChristensen04} C. Antonescu, E. Christensen, \emph{Metrics on group C$^{\ast}$-algebras and a non-commutative Arzelà--Ascoli theorem}, J. Funct. Anal. 214 (2004), no. 2, 247--259.

\bibitem{Austad26} A. Austad, \emph{Quantum metrics from length functions on étale groupoids},  arXiv preprint arXiv:2602.20032 (2026).

\bibitem{AKK25} A. Austad, J. Kaad and D. Kyed, \emph{Quantum metrics on crossed products with groups of polynomial growth}, Trans. Amer. Math. Soc. 378 (2025), no. 3, 1939--1973.

\bibitem{BMR10} J. Bellissard, M. Marcolli, K. Reihani, \emph{Dynamical systems on spectral metric spaces}, arXiv preprint arXiv:1008.4617 (2010).

\bibitem{BVZ15} J. Bhowmick, C. Voigt, J. Zacharias, \emph{Compact quantum metric spaces from quantum groups of rapid decay}, J. Noncommut. Geom. 9 (2015), no. 4, 1175--1200.

\bibitem{ChristRieffel} M. Christ, M. A. Rieffel, \emph{Nilpotent group C$^{\ast}$-algebras as compact quantum metric spaces}, Canad. Math. Bull. 60 (2017), no. 1, 77--94.

\bibitem{Connes89} A. Connes, \emph{Compact metric spaces, Fredholm modules, and hyperfiniteness,} Ergodic Theory Dynam. Systems 9 (1989), no. 2, 207--220.

\bibitem{HSWZ13} A. Hawkins, A. Skalski, S. White, J. Zacharias, \emph{On spectral triples on crossed products arising from equicontinuous actions}, Math. Scand. 113 (2013), no. 2, 262--291.

\bibitem{FLLP24} C. Farsi, T. Landry, N. S. Larsen, J. Packer, \emph{Spectral triples for noncommutative solenoids and a Wiener's lemma}, J. Noncommut. Geom. 18 (2024), no. 4, 1415--1452.

\bibitem{KK21} J. Kaad, D. Kyed, \emph{Dynamics of compact quantum metric spaces}, Ergodic Theory Dynam. Systems 41 (2021), no. 7, 2069--2109.

\bibitem{Klisse26} M. Klisse, \emph{Crossed products as compact quantum metric spaces}, Canad. J. Math. 78 (2026), no. 1, 245--275.

\bibitem{KlissePerovic25} M. Klisse, H. Perovi\'{c}, \emph{Quantum metric structures on Iwahori-Hecke algebras}, arXiv preprint arXiv:2508.07857 (2025).

\bibitem{LongWu17} B. Long, W. Wu, \emph{Twisted group C$^{\ast}$-algebras as compact quantum metric spaces}, Results Math. 71 (2017), no. 3, 911--931.

\bibitem{LongWu21} B. Long, W. Wu, \emph{Twisted bounded-dilation group C$^{\ast}$-algebras as C$^{\ast}$-metric algebras}, Sci. China Math. 64 (2021), no. 3, 547--572.

\bibitem{OzawaRieffel} N. Ozawa, M. Rieffel, \emph{Hyperbolic group C$^{\ast}$-algebras and free-product C$^{\ast}$-algebras as compact quantum metric spaces}, Canad. J. Math. 57 (2005), no. 5, 1056--1079.

\bibitem{Rieffel98} M. Rieffel, \emph{Metrics on states from actions of compact groups}, Doc. Math. 3 (1998), 215--229.

\bibitem{Rieffel99} M. Rieffel, \emph{Metrics on state spaces}, Doc. Math. 4 (1999), 559--600.

\bibitem{Rieffel02} M. Rieffel, \emph{Group C$^{\ast}$-algebras as compact quantum metric spaces}, Doc. Math. 7 (2002), 605--651.

\bibitem{Rieffel04} M. Rieffel, \emph{Gromov--Hausdorff distance for quantum metric spaces}, Mem. Amer. Math. Soc. 168 (2004), no. 796, 1--65.

\vspace{3mm}
 
\end{thebibliography}
\end{document}